\documentclass[9pt]{article}
\usepackage{graphicx}
\usepackage{float}
\usepackage{amsfonts,amsmath,amssymb,amsbsy,amsthm,enumerate}
\usepackage{bm}

\usepackage{tikz}
\usetikzlibrary{positioning,chains,fit,shapes,calc}
\usetikzlibrary{calc,positioning,decorations.pathmorphing,decorations.pathreplacing}

\usetikzlibrary{decorations.markings}
\usetikzlibrary{decorations.pathmorphing}
\usetikzlibrary{arrows.meta}
\usetikzlibrary{positioning,chains,fit,shapes,calc}

\sloppy


\newcommand{\etal}{\textit{et~al.}}
\usepackage{mathrsfs} 
\usepackage{comment}
\usepackage[caption=false]{subfig}
\newtheorem{theorem}{Theorem}[section]

\newtheorem{conjecture}[theorem]{Conjecture}
\newtheorem{proposition}[theorem]{Proposition}
\newtheorem{lemma}[theorem]{Lemma}
\newtheorem{corollary}[theorem]{Corollary}

\newtheorem{claim}{Claim}

\definecolor{myblue}{RGB}{80,80,160}
\definecolor{mygreen}{RGB}{80,160,80}
\definecolor{myred}{RGB}{255,0,0}
\definecolor{mybrown}{RGB}{139,69,19}

\newtheoremstyle{case}{}{}{}{}{\bfseries}{:}{ }{}
\theoremstyle{case}

\numberwithin{subcase}{case}

\bibliographystyle{plain}

\usepackage{marginnote}

\title{On the intersection of two longest paths in $k$-connected graphs}
\author{Juan Guti\'errez 
}

\begin{document}

\maketitle

\begin{abstract}
We show that every pair of longest paths in a $k$-connected graph on $n$ vertices intersect each other in at least 
$(8k-n+2)/5$ vertices.
We also show that, in a 4-connected graph, every pair of longest paths intersect each other in at least four vertices.
This confirms a conjecture of Hippchen for $k$-connected graphs when $k\leq 4$ or $k\geq (n-2)/3$.
\end{abstract}

\section{Introduction}

It is known that every pair of longest paths
in a connected graph intersect in at least a vertex.
Hippchen \cite[Conjecture 2.2.4]{Hippchen08} conjectured that, for $k$-connected
graphs, every pair of longest paths intersect each other in at least $k$ vertices.
A similar conjecture, for cycles instead of paths, was proposed by Gr\"otschel  and attributed to Scoot Smith \cite[Conjecture 5.2]{ Grotschel84}.

Smith's Conjecture has been verified up to $k=6$
\cite{Grotschel84}, and for a general $k$, it was proved that every pair of longest cycles intersect in at least $ck^{3/5}$ vertices, for a constant $c \thickapprox 0.2615$
\cite{Chen98}.
However, for Hippchen's Conjecture, the only known result is for $k=3$ and was proved by Hippchen himself
\cite[Lemma 2.2.3]{Hippchen08}.
In this paper, we verify Hippchen's Conjecture for $k=4$.

For $k\geq 5$, Hippchen's Conjecture seems not trivial and hard to prove.
As it is difficult to prove it, it is natural to ask for lower bounds on the intersection of two longest paths in $k$-connected graphs.
Note that, if the graph is Hamiltonian, then it is clear that this lower bound equals $k$. As Hamiltonian paths appear in highly connected graphs, this motivates us to study cases in which $k = \Omega(n)$.
In this paper, we show that
any two longest paths intersect in at least $(8k-n+2)/5$ vertices.
In fact, this result says that if $k=\Omega(n)$, then every pair of longest paths intersect in at least $\Omega(k)$ vertices.

Finally, we show, for any $k$, an infinite family of graphs that make Hippchen's Conjecture tight.




\section{Preliminaries}\label{section:definitions}

In this paper all graphs are simple (without loops or parallel edges) and the notation and terminology are standard.
When we refer to paths, we mean simple paths (without repetitions of edges or vertices).
The length of a path~$P$ is the number of edges it has, and it is denoted by~$|P|$. A longest path in a graph is a path with maximum length among all paths.
Given a path~$P$ and two vertices~$x$ and $y$ 
in~$P$, we denote by~$P[x,y]$ the subpath of~$P$ with extremes~$x$ and~$y$.
Also, we denote the length of~$P[x,y]$
by~$\mbox{dist}_P(x,y)$.

Given two set of vertices~$S$ and~$T$ in a graph~$G$, an~$S$-$T$ path is a path with one extreme in~$S$, the other extreme in~$T$,
and whose internal vertices are neither in~$S$ nor~$T$.
If~$S=\{v\}$, we also say that a~$S$-$T$ path is a~$v$-$T$ path.
When we refer to the intersection of two paths in a graph, we mean vertex-intersection, that is, the set of vertices the paths share.
Two paths are internally disjoint if
they have no internal vertices in common.

A graph~$G$ is~$k$-connected if, for any two
vertices~$u$ and~$v$ in~$G$, there exists a set of
$k$~$u$-$v$ internally disjoint paths. 
Our proofs rely in two well-known facts, that we state in the following propositions. The first proposition is also known as Fan Lemma.
The second one is an easy corollary of
a similar result on cycles instead of paths
{\cite[Exercise 3.14]{Diestel10}}.

\begin{proposition}[{\cite[Proposition 9.5]{BondyM08}}]\label{prop:fan-lemma}
 Let~$G$ be a~$k$-connected graph. Let~${v \in V(G)}$
and~$S \subseteq V(G) \setminus \{v\}$.
If~$|S|\geq k$ then there exists a set of~$k$~$v$-$S$ internally disjoint paths.
Moreover, every two paths in this set have~$\{v\}$
as their intersection. 
\end{proposition}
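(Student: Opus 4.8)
The plan is to reduce the statement to the vertex form of Menger's theorem by an auxiliary-vertex construction, turning the desired fan from~$v$ to~$S$ into a family of internally disjoint paths joining a single pair of vertices. First I would build a graph~$G'$ from~$G$ by adding one new vertex~$w$ and joining it to every vertex of~$S$. Since~$S \subseteq V(G) \setminus \{v\}$, the new vertex~$w$ is not adjacent to~$v$, so Menger's theorem applies to the non-adjacent pair~$\{v,w\}$ and tells us that the maximum number of internally disjoint~$v$-$w$ paths in~$G'$ equals the minimum size of a~$v$-$w$ vertex separator in~$G'$.

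The core of the argument is the claim that every~$v$-$w$ separator~$T$ of~$G'$ has~$|T| \geq k$. Note that~$v,w \notin T$. If~$S \subseteq T$ then~$|T| \geq |S| \geq k$ and we are done, so suppose there is some~$s \in S \setminus T$. Because the edge~$sw$ survives in~$G' - T$, the vertices~$s$ and~$w$ lie in the same component of~$G' - T$; since~$T$ separates~$v$ from~$w$, it then also separates~$v$ from~$s$ in~$G' - T$, and hence in~$G - T$ as well (here it is used that the only neighbours of~$w$ lie in~$S$). Thus~$T$ is a~$v$-$s$ separator in~$G$. By~$k$-connectivity there are~$k$ internally disjoint~$v$-$s$ paths in~$G$, and each of them must use an internal vertex of~$T$ to cross from the side of~$v$ to the side of~$s$; these internal vertices are pairwise distinct, forcing~$|T| \geq k$. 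This proves the claim.

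Granting the claim, Menger's theorem supplies~$k$ pairwise internally disjoint~$v$-$w$ paths~$Q_1,\dots,Q_k$ in~$G'$. Each~$Q_i$ reaches~$w$ only along a final edge~$s_iw$ with~$s_i \in S$, so deleting~$w$ from~$Q_i$ produces a~$v$-$S$ path~$P_i$ in~$G$ ending at~$s_i$. Since internally disjoint~$v$-$w$ paths share exactly their two endpoints, i.e.\ $Q_i \cap Q_j = \{v,w\}$ for~$i \neq j$, removing~$w$ gives~$P_i \cap P_j = \{v\}$; in particular the endpoints~$s_i$ are distinct, so~$P_1,\dots,P_k$ is a genuine fan to~$k$ distinct vertices of~$S$ and every two of these paths meet exactly in~$\{v\}$, which is the ``moreover'' assertion.

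The step I expect to be the main obstacle is the separator bound, whose delicate points are the case distinction isolating~$S \subseteq T$ and the verification that separation of~$v$ from~$s$ transfers correctly between~$G' - T$ and~$G - T$, using that the neighbourhood of~$w$ is contained in~$S$. Everything else is bookkeeping once Menger's theorem and the path-characterization of~$k$-connectivity are in hand.
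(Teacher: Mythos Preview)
The paper does not actually prove this proposition: it is stated as a well-known fact (the Fan Lemma) and cited from Bondy--Murty without argument, so there is no in-paper proof to compare against. Your auxiliary-vertex reduction to Menger's theorem is the standard textbook proof, and the overall strategy is sound.

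One small gap worth tightening: after deleting~$w$ from each~$Q_i$ you obtain a path from~$v$ to~$s_i \in S$, but nothing prevents this path from passing through other vertices of~$S$ internally, so it need not be a~$v$-$S$ path in the sense defined in the paper (internal vertices outside~$S$). The fix is routine: truncate each~$P_i$ at the \emph{first} vertex of~$S$ encountered when traversing from~$v$. The truncated paths are still pairwise disjoint except at~$v$ (they are subpaths of paths with that property), and they still end at distinct vertices of~$S$ since any common endpoint would lie in~$P_i \cap P_j = \{v\} \not\subseteq S$. With that adjustment your argument is complete.
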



\begin{proposition}\label{prop:diestel-exercise-paths}
The length of a longest path in a~$k$-connected no Hamiltonian graph is at least $2k$.
\end{proposition}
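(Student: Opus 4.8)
The plan is to reduce the statement to its cyclic analogue, which is exactly the content of the cited exercise \cite[Exercise 3.14]{Diestel10}: every $k$-connected graph $G$ on $n$ vertices (with $k\ge 2$) contains a cycle of length at least $\min\{2k,n\}$. The cases $k\le 1$ are immediate---a connected non-Hamiltonian graph has at least three vertices, hence a path of length at least $2\ge 2k$---so I would assume $k\ge 2$ and let $C$ be such a cycle.

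The first real step is to observe that non-Hamiltonicity prevents $C$ from being spanning. Indeed, a cycle through all $n$ vertices is a Hamiltonian cycle, which in particular yields a Hamiltonian path; so $C$ has length at most $n-1$. Combined with $|C|\ge \min\{2k,n\}$, this forces $\min\{2k,n\}=2k$ (otherwise $|C|\ge n$, a contradiction), and therefore $|C|\ge 2k$ while $V(C)\ne V(G)$.

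The second step is to lengthen $C$ into a path by exploiting the leftover vertices. Since $G$ is connected and $V(C)\neq V(G)$, there is an edge $ux$ with $u\in V(C)$ and $x\notin V(C)$. Deleting from $C$ one of the two edges incident with $u$ leaves a path $P'$ on $V(C)$ with endpoint $u$ and length $|C|-1\ge 2k-1$. Appending the edge $ux$---legitimate because $x\notin V(C)=V(P')$---produces a path of length at least $2k$, so a longest path of $G$ has length at least $2k$.

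The only delicate point is bookkeeping rather than a genuine obstacle: a cycle of length $2k$ yields only a path of length $2k-1$, so the single extra edge to an outside vertex is exactly what is needed to reach $2k$, and this is precisely where non-Hamiltonicity enters. I would also fix once and for all whether ``Hamiltonian'' refers to a Hamiltonian path or a Hamiltonian cycle; either reading works, since the absence of a Hamiltonian path implies the absence of a Hamiltonian cycle, so the argument above applies verbatim.
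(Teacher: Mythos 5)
Your proof is correct and is essentially the paper's own argument: the paper gives no written proof of this proposition, presenting it as an easy corollary of the cited cycle result \cite[Exercise 3.14]{Diestel10}, and your reduction---a cycle of length at least $\min\{2k,n\}$ cannot be spanning in a non-Hamiltonian graph, so connectivity lets you break it at a vertex with a neighbor outside and extend it by one edge to a path of length at least $2k$---is exactly that corollary made explicit. One small caveat: your closing claim that ``either reading works'' fails in the corner case $k=1$ under the Hamiltonian-\emph{cycle} reading ($K_2$ is $1$-connected, has no Hamiltonian cycle, and its longest path has length $1<2$), so the Hamiltonian-\emph{path} reading, which your argument fully covers, is the one to adopt.
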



\section{High connectivity}
\label{section:basic-concepts-treewidth}

In this section, we show an interesting result for $k$-connected graphs when $k = \Omega(n)$.
We begin with a simple observation.

\begin{proposition}\label{prop:PcapQgeq2L+2-n}
Let $G$ be a $k$-connected graph on $n$ vertices.
Let $L$ be the length of a longest path in $G$. Let $P$ and $Q$ be two longest paths in $G$.
Then $|V(P)\cap V(Q)| \geq 2L+2-n$.
\end{proposition}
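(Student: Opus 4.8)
This is a simple counting argument.The plan is to use a direct counting argument via inclusion–exclusion; no connectivity is actually needed here. First I would observe that, since $P$ and $Q$ are both longest paths, each has length $L$, and therefore each has exactly $L+1$ vertices, so $|V(P)| = |V(Q)| = L+1$.

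Next I would apply the inclusion–exclusion identity
\[
|V(P)\cap V(Q)| = |V(P)| + |V(Q)| - |V(P)\cup V(Q)|.
\]
Since both $V(P)$ and $V(Q)$ are subsets of $V(G)$, their union is as well, giving $|V(P)\cup V(Q)| \leq |V(G)| = n$. Substituting the vertex counts and this bound yields
\[
|V(P)\cap V(Q)| \geq (L+1) + (L+1) - n = 2L+2-n,
\]
which is the claimed inequality.

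There is essentially no obstacle in this argument; it is a pure pigeonhole/cardinality estimate, and the $k$-connectivity hypothesis plays no role (it is carried along only because the proposition sits inside the study of $k$-connected graphs). The one point worth stating explicitly is the translation between \emph{length} and \emph{number of vertices} of a path, namely that a path of length $L$ has $L+1$ vertices; everything else follows immediately. I expect this proposition to serve as a warm-up bound that becomes effective precisely when $L$ is large relative to $n$ (for instance when the graph is close to Hamiltonian), which is the regime $k=\Omega(n)$ advertised in the introduction, and I would anticipate it being combined later with Proposition~\ref{prop:diestel-exercise-paths} to extract a bound in terms of $k$.
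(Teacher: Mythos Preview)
Your proof is correct and is essentially identical to the paper's own argument: the paper also applies the inclusion--exclusion identity $|V(P)\cap V(Q)| = |V(P)| + |V(Q)| - |V(P)\cup V(Q)|$ together with $|V(P)\cup V(Q)| \le n$ to conclude. Your additional remarks about the $k$-connectivity hypothesis being unused and the anticipated combination with Proposition~\ref{prop:diestel-exercise-paths} are also exactly how the paper proceeds.
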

\begin{proof}
It suffices to note that $$|V(P)\cap V(Q)|=|V(P)|+|V(Q)|-|V(P)\cup	V(Q)|\geq 2L+2-n,$$ as we want.
\end{proof}

Proposition \ref{prop:PcapQgeq2L+2-n} together with Proposition \ref{prop:diestel-exercise-paths} are enough to give a nontrivial result on Hippchen's Conjecture.

\begin{corollary}
Let $G$ be a $k$-connected graph on $n$ vertices.
If $k \geq (n-2)/3$ then every two longest paths intersect in at least $k$ vertices.
\end{corollary}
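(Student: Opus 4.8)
The plan is to combine the two propositions just established. The corollary concerns a $k$-connected graph $G$ on $n$ vertices with $k \geq (n-2)/3$, and I want to show any two longest paths $P, Q$ share at least $k$ vertices. I would split into two cases according to whether $G$ is Hamiltonian.

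First I would dispose of the Hamiltonian case. If $G$ has a Hamiltonian path, then every longest path uses all $n$ vertices, so $V(P) = V(Q) = V(G)$ and the intersection has $n$ vertices. Since $G$ is $k$-connected on $n$ vertices we have $n \geq k+1 > k$, so the bound holds trivially. (Even a Hamiltonian cycle forces longest paths of length $n-1$, giving the same conclusion.)

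For the main case, assume $G$ is not Hamiltonian. Then Proposition~\ref{prop:diestel-exercise-paths} gives $L \geq 2k$, where $L$ is the length of a longest path. Feeding this into Proposition~\ref{prop:PcapQgeq2L+2-n} yields
\[
|V(P)\cap V(Q)| \;\geq\; 2L + 2 - n \;\geq\; 4k + 2 - n.
\]
The task is then purely arithmetic: I must check that $4k + 2 - n \geq k$, i.e. that $3k + 2 \geq n$, i.e. $k \geq (n-2)/3$. This is exactly the hypothesis, so the chain of inequalities closes and the corollary follows.

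There is no real obstacle here — the two preliminary propositions do all the heavy lifting, and the only subtlety is remembering to treat the Hamiltonian case separately, since Proposition~\ref{prop:diestel-exercise-paths} only guarantees $L \geq 2k$ when $G$ is non-Hamiltonian. In the Hamiltonian case the conclusion is even stronger (the paths coincide as vertex sets), so that case is immediate. The mild care needed is that "non-Hamiltonian" in Proposition~\ref{prop:diestel-exercise-paths} refers to the absence of a Hamiltonian \emph{cycle}; I would confirm that whenever $G$ fails to have a Hamiltonian path, $L < n-1$ and the bound $L \geq 2k$ still applies, so the argument goes through regardless of which notion of Hamiltonicity one uses at the boundary.
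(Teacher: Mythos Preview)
Your argument is correct and matches the paper's approach exactly: the paper does not spell out a proof but simply remarks that Proposition~\ref{prop:PcapQgeq2L+2-n} together with Proposition~\ref{prop:diestel-exercise-paths} suffice, which is precisely the combination you carry out. Your case split on the existence of a Hamiltonian path is the natural way to handle the hypothesis of Proposition~\ref{prop:diestel-exercise-paths}, and the arithmetic $4k+2-n\geq k \iff k\geq (n-2)/3$ is exactly the intended computation.
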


Moreover, a strong result can be derived from these two propositions: every pair of longest paths intersect
in at least $4k+2-n$ vertices.
The rest of this section is devoted to improve this result when $k < \frac{n-2}{3}$.
The improvement relies in the following lemma. Its proof is presented at the end of the section.

\begin{lemma}\label{lemma:PcapQgeq3k-L}
Let $G$ be a $k$-connected graph.
Let $L$ be the length of a longest path in $G$. Let $P$ and $Q$ be two longest paths in $G$.
Then $|V(P)\cap V(Q)| \geq 2k-L/2$.
\end{lemma}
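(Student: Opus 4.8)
The plan is to apply the Fan Lemma (Proposition~\ref{prop:fan-lemma}) from an endpoint of $P$ to the vertex set of $Q$, and to combine it with two exchange inequalities, one coming from the maximality of $Q$ and one from the maximality of $P$. Write $L+1=|V(P)|=|V(Q)|$, let $a,b$ be the endpoints of $P$ and $c,d$ those of $Q$, and put $I=V(P)\cap V(Q)$. Since $G$ is $k$-connected it has minimum degree at least $k$, so $L\ge k$ and hence $|V(Q)|=L+1> k$; I may therefore apply Proposition~\ref{prop:fan-lemma} with $v=a$ and $S=V(Q)$ to obtain $k$ internally disjoint $a$--$V(Q)$ paths $R_1,\dots,R_k$ that pairwise meet only at $a$, where each $R_i$ meets $Q$ exactly at its terminal vertex $z_i\in V(Q)$ and the $z_i$ are distinct. (If the chosen endpoint lies on $Q$ the fan degenerates; this case is addressed at the end.)

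For the first inequality, fix $i$. Because $R_i$ meets $Q$ only at $z_i$, both $R_i\cup Q[z_i,c]$ and $R_i\cup Q[z_i,d]$ are genuine paths of $G$, of lengths $|R_i|+\mathrm{dist}_Q(z_i,c)$ and $|R_i|+\mathrm{dist}_Q(z_i,d)$. As $L$ is the length of a longest path, each is at most $L$; adding them and using $\mathrm{dist}_Q(z_i,c)+\mathrm{dist}_Q(z_i,d)=L$ gives $|R_i|\le L/2$. More generally, if $z_i$ precedes $z_j$ along $Q$, then $Q[c,z_i]\cup R_i\cup R_j\cup Q[z_j,d]$ is a simple path (the $R$'s meet only at $a$ and meet $Q$ only at $z_i,z_j$), whence $|R_i|+|R_j|\le \mathrm{dist}_Q(z_i,z_j)$. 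The second inequality uses the maximality of $P$: each $R_i$ must return to $V(P)$ after leaving $a$, for otherwise $P$ followed by $R_i$ would be a path of length $L+|R_i|>L$. Call $z_i$ a \emph{hit} if $z_i\in V(P)$, so $z_i\in I$, and an \emph{escape} otherwise, and let $p$ and $q=k-p$ be their numbers. For a hit $|R_i|\ge 1$, while for an escape the forced return occurs at an internal vertex of $R_i$, so $|R_i|\ge 2$.

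The target is then to bound the escapes by $q\le L/2-k$. It suffices to establish the fan-length bound $\sum_{i=1}^{k}|R_i|\le L/2$: combined with $\sum_i|R_i|\ge p+2q=k+q$ this yields $k+q\le L/2$, hence $p=k-q\ge 2k-L/2$, and since $I$ contains all hits we get $|V(P)\cap V(Q)|\ge p\ge 2k-L/2$. This is tight on the family $K_k+\overline{K_m}$ (a clique of $k$ ``hubs'' joined completely to a large independent set): there $L=2k$, every fan leg has length $1$ and lands on a hub, and the $k$ hubs are exactly the intersection of any two longest paths, so $p=k=2k-L/2$.

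The main obstacle is precisely the fan-length bound $\sum_i|R_i|\le L/2$ (equivalently, controlling the escapes). The per-leg bound $|R_i|\le L/2$ and the pairwise bound $|R_i|+|R_j|\le\mathrm{dist}_Q(z_i,z_j)$, summed over consecutive legs, only deliver $\sum_i|R_i|\lesssim L$, which is weaker by a factor of two and would give the useless estimate $2k-L$. To recover the factor $1/2$ I expect to analyse each escaping leg together with the $P$--$Q$ bridge it determines, namely the final subpath $R_i[g_i,z_i]$ where $g_i$ is the last vertex of $R_i$ on $P$, and to invoke the maximality of \emph{both} $P$ and $Q$ so as to charge distinct escapes to disjoint arcs of $Q$ whose total length exceeds by at least $2q$ the length already accounted for by the $p$ hits. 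Finally, the degenerate situations must be disposed of separately: when an endpoint of $P$ lies on $Q$ that endpoint already belongs to $I$ and one reruns the argument from $b$ or from a suitable interior vertex, and when $2k-L/2\le 0$ the statement is vacuous.
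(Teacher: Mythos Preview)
Your overall strategy---fan from an endpoint into the other longest path, split the legs into ``hits'' landing in $I$ and ``escapes'' landing outside, bound escape legs below by $2$, and bound $\sum_i|R_i|$ above---is exactly the paper's. The proposal is incomplete, however, because the inequality $\sum_i|R_i|\le L/2$ that you call ``the main obstacle'' already follows from material you have written down, plus one more exchange of the same kind. You have the consecutive-pair bound $|R_i|+|R_{i+1}|\le \mathrm{dist}_Q(z_i,z_{i+1})$; what you are missing are the two endpoint bounds. You yourself observe that $R_i\cup Q[z_i,c]$ and $R_i\cup Q[z_i,d]$ are paths; applying this to $i=1$ and $i=k$ gives $|R_1|\le \mathrm{dist}_Q(c,z_1)$ and $|R_k|\le \mathrm{dist}_Q(z_k,d)$. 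Adding these to the sum of the consecutive-pair bounds telescopes:
\[
2\sum_{i=1}^k|R_i|\ \le\ \mathrm{dist}_Q(c,z_1)+\sum_{i=1}^{k-1}\mathrm{dist}_Q(z_i,z_{i+1})+\mathrm{dist}_Q(z_k,d)\ =\ L,
\]
whence $\sum_i|R_i|\le L/2$ and $|I|\ge p\ge 2k-L/2$. No bridge analysis is needed; the paper's proof is precisely this telescoping (Claims~\ref{claim:p1R1p2Rk} and~\ref{claim:Pvivi+1-RiRi+1}, with the roles of $P$ and $Q$ interchanged).

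Your treatment of the degenerate case $a\in V(Q)$ is also a gap: both endpoints of $P$ may lie on $Q$, and ``a suitable interior vertex'' does not work since the Fan Lemma needs $v\notin S$ and the bound $|R_i|\ge 2$ for escapes uses that the fan originates at an endpoint. The paper fixes this by a P\'osa rotation: if the chosen endpoint $q$ lies in $I$, either each of its $\ge k$ neighbours on $Q$ is the successor of an intersection vertex, giving $|I|\ge k\ge 2k-L/2$ directly (using $L\ge 2k$), or one rotation produces a longest path with the same vertex set whose new endpoint is outside $I$.
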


With that lemma at hand, it is easy to obtain the main result of this section.

\begin{theorem}
Let $G$ be a $k$-connected graph on $n$ vertices. Let $P$ and $Q$ be two longest paths in $G$.
Then $|V(P)\cap V(Q)| \geq (8k-n+2)/5.$
\end{theorem}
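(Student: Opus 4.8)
The plan is to eliminate the unknown length $L$ of a longest path by taking a suitable weighted combination of the two lower bounds already in hand. Writing $I = |V(P)\cap V(Q)|$, Proposition~\ref{prop:PcapQgeq2L+2-n} gives $I \geq 2L + 2 - n$, an estimate that \emph{increases} with $L$, while Lemma~\ref{lemma:PcapQgeq3k-L} gives $I \geq 2k - L/2$, an estimate that \emph{decreases} with $L$. Since $L$ is a fixed but unknown parameter of $G$, neither bound alone suffices: a long longest path makes the first strong and the second weak, and vice versa. The goal is therefore to find the combination of the two that is insensitive to $L$.

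First I would multiply the inequality of Lemma~\ref{lemma:PcapQgeq3k-L} by $4$ and add it to the inequality of Proposition~\ref{prop:PcapQgeq2L+2-n}. The $L$-terms are $+2L$ from the first and $4\cdot(-L/2) = -2L$ from the second, so they cancel exactly. Concretely, this yields
\[
5I \;\geq\; (2L + 2 - n) + 4\left(2k - \tfrac{L}{2}\right) \;=\; 8k - n + 2,
\]
and dividing by $5$ gives the claimed bound $I \geq (8k - n + 2)/5$.

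There is essentially no obstacle in this final step: all the difficulty has been pushed into Lemma~\ref{lemma:PcapQgeq3k-L}, whose proof is deferred to the end of the section. The only thing to notice is the correct choice of weights, namely $1$ and $4$, which is forced by the requirement that the coefficients of $L$ sum to zero (one needs $2a - b/2 = 0$, i.e.\ $b = 4a$). I therefore expect the real content of the section to lie entirely in establishing Lemma~\ref{lemma:PcapQgeq3k-L}; the theorem itself is then an immediate two-line consequence.
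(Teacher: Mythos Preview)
Your proposal is correct and matches the paper's argument: the paper writes $|V(P)\cap V(Q)|\geq \max\{2L+2-n,\,2k-L/2\}\geq (8k-n+2)/5$, and your weighted combination with coefficients $1$ and $4$ is precisely what justifies that last inequality (equivalently, it is the value of the $\max$ at the crossing point of the two linear functions of $L$). As you note, all the work lies in Lemma~\ref{lemma:PcapQgeq3k-L}.
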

\begin{proof}
Let $L$ be the length of a longest path in $G$.
By Proposition \ref{prop:PcapQgeq2L+2-n} and Lemma \ref{lemma:PcapQgeq3k-L},
we have
$$|V(P)\cap V(Q)|\geq \max\{2L+2-n, 2k-L/2\} \geq (8k-n+2)/5,$$ as we want.
\end{proof}



We now proceed with the proof of
Lemma \ref{lemma:PcapQgeq3k-L}.
\begin{proof}[Proof of Lemma~\ref{lemma:PcapQgeq3k-L}]
Let~$X=V(P) \cap V(Q)$.
Let~$q$ be an extreme of~$Q$. 
Suppose for a moment that $q \in X$.
As~$G$ is $k$-connected,~$q$ has at least~$k$ neighbors in~$Q$.
Let~$X'$ be the set of vertices of $Q$ that are next to a vertex in~$X$ considering the order of the path starting at~$q$.
That is,~$X'=\{x' \in V(Q): \text{ there exists a vertex } {x\in X} \text{ with } Q[q,x']=Q[q,x]+xx'\}$.
If every neighbor of~$q$
is in~$X'$, then, as~$L \geq 2k$
by Proposition \ref{prop:diestel-exercise-paths}, we have~$|X|\geq|X'| \geq k \geq 2k-L/2$ and we are done.

Hence, there exists a neighbor~$r$ of~$q$ in~$V(Q) \setminus X'$.
Let~$q'$ be the vertex adjacent to~$r$ in~$Q$ that is closer to~$q$ in~$Q$.
In that situation,
the path~$Q'=Q + rq - rq'$ is a longest path,
with~$q'$ as one of its extremes\footnote{This interchange is known as P\'{o}sa's rotation \cite{Posa76}.}.
As~$r \notin X'$, we have that~$q' \notin X$. 
Note that~$V(Q')=V(Q)$.
Thus, from now on, we may assume that
$q \notin X$.

By Proposition \ref{prop:fan-lemma}, as~$|V(P)| \geq k$, there exists
a set, say~$\mathcal{R}$, of~$k$~$q$-$V(P)$ internally disjoint paths that end at different vertices of $P$.
Let~$\mathcal{R}_{A}$ be the set of paths in~$\mathcal{R}$ that have an extreme in~$X$. That is,
$\mathcal{R}_{A}=\{R \in \mathcal{R}: V(R) \cap X \neq \emptyset\}$. Let~$\mathcal{R}_{B}=\mathcal{R} \setminus \mathcal{R}_{A}$.
Let~$A$ and~$B$ be the set of corresponding extremes of~$\mathcal{R}_{A}$ and~$\mathcal{R}_{B}$, respectively. That is,
$A= \{a \in X \cap V(R): R \in \mathcal{R}_A\}$
and~$B= \{b \in X \cap V(R): R \in \mathcal{R}_B\}$.

\begin{claim}\label{claim:RBgeq2}
 If~$R \in \mathcal{R}_{B}$, then~$|R| \geq 2$.
\end{claim}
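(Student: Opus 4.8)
The plan is to argue by contradiction: I would assume that some $R \in \mathcal{R}_B$ has length $|R| \leq 1$ and produce a path longer than $Q$, contradicting that $Q$ is a longest path. The whole argument hinges on two membership facts about the two extremes of $R$, namely that $q$ lies outside $V(P)$ and that the $P$-endpoint of $R$ lies outside $V(Q)$; both follow from the standing assumptions, so the rest is just the observation that a longest path admits no extension at an endpoint.

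First I would record that $q \notin V(P)$. Since $q$ is an extreme of $Q$ we have $q \in V(Q)$, and we are in the case $q \notin X = V(P) \cap V(Q)$; together these force $q \notin V(P)$. Consequently every path of $\mathcal{R}$, which starts at $q$ and ends in $V(P)$, has length at least $1$, so the only possibility I need to exclude is $|R| = 1$.

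Next I would pin down the $P$-endpoint of $R$. Writing $R$ as a $q$-$V(P)$ path with other extreme $p \in V(P)$, the definition of a $q$-$V(P)$ path makes $p$ the unique vertex of $R$ on $P$. Because $R \in \mathcal{R}_B$ gives $V(R) \cap X = \emptyset$ while $p \in V(R) \cap V(P)$, I obtain $p \notin X$, hence $p \notin V(Q)$. Now if $|R| = 1$, then $R$ is the single edge $qp$, so $q$ and $p$ are adjacent; since $q$ is an extreme of $Q$ and $p \notin V(Q)$, the path $Q + qp$ has length $L+1$, contradicting the maximality of $Q$. I would therefore conclude $|R| \geq 2$.

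The argument is short, and the only point requiring care is the bookkeeping that locates the two extremes of $R$: confirming $q \notin V(P)$ from the case assumption $q \notin X$, and $p \notin V(Q)$ from $R \in \mathcal{R}_B$. Once these are stated cleanly, the contradiction is immediate, so I do not anticipate any genuine obstacle here; the claim is essentially a normalization step setting up the later use of the internal vertices of the paths in $\mathcal{R}_B$.
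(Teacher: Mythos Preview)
Your argument is correct and essentially identical to the paper's: both proceed by contradiction, note that the $P$-endpoint of $R$ is not in $V(Q)$ (since $R\in\mathcal{R}_B$), and extend $Q$ by the edge $qp$ to obtain a longer path. You simply add the bookkeeping that $q\notin V(P)$ forces $|R|\geq 1$, which the paper leaves implicit.
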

\begin{proof}
Suppose by contradiction that~$|R|=1$. Let~$b$ be the extreme of~$R$ different from~$q$. As
$b \notin V(Q)$,~$Q + qb$ is a path in~$G$ longer than~$Q$, a contradiction.
\end{proof} 


Let~$p_1$ and~$p_2$ be the two extremes of~$P$.
Let $\mathcal{R}=\{R_1,R_2, \ldots, R_k\}$, 
and for $1 \leq i \leq k$, let $v_i$
be the corresponding extreme of $R_i$ that is in $P$.
Moreover, we may assume, without loss of generality, that
$\text{dist}_P(p_1,v_i) < \text{dist}_P(p_1,v_{i+1})$ for $1 \leq i \leq k-1$.


\begin{claim}\label{claim:p1R1p2Rk}
$\text{dist}_P(p_1,v_1) \geq |R_1|$ and 
$\text{dist}_P(v_k,p_2) \geq |R_k|$.
\end{claim}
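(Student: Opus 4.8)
The plan is to prove each of the two inequalities by a rerouting (path-exchange) argument that pits the maximality of $P$ against the length of $R_1$ (respectively $R_k$). The only structural facts I need are that, by the Fan Lemma, the internal vertices of each $R_i$ lie outside $V(P)$, and that $q \notin V(P)$ — the latter because $q \in V(Q)$ but $q \notin X = V(P)\cap V(Q)$ after the reduction made above the claim.

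First I would establish $\text{dist}_P(p_1,v_1) \geq |R_1|$. Consider concatenating $R_1$, traversed from $q$ to $v_1$, with the subpath $P[v_1,p_2]$. Since the internal vertices of $R_1$ avoid $V(P)$ and $q \notin V(P)$, the path $R_1$ and the subpath $P[v_1,p_2]$ share only the vertex $v_1$; hence their concatenation is a genuine simple path from $q$ to $p_2$, of length $|R_1| + \text{dist}_P(v_1,p_2)$. Because $P$ is a longest path, this length is at most $|P| = \text{dist}_P(p_1,v_1) + \text{dist}_P(v_1,p_2)$. Cancelling the common term $\text{dist}_P(v_1,p_2)$ yields $|R_1| \leq \text{dist}_P(p_1,v_1)$, as desired.

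The second inequality $\text{dist}_P(v_k,p_2) \geq |R_k|$ follows by the symmetric construction: concatenate the subpath $P[p_1,v_k]$ with $R_k$, traversed from $v_k$ to $q$, to obtain a simple path from $p_1$ to $q$ of length $\text{dist}_P(p_1,v_k) + |R_k|$, and compare it to $|P| = \text{dist}_P(p_1,v_k) + \text{dist}_P(v_k,p_2)$ using the maximality of $P$. Note that the same exchange in fact bounds $|R_i|$ for every index $i$, so the choice of the extreme indices $1$ and $k$ plays no role in the claim itself; it is the overall proof of Lemma~\ref{lemma:PcapQgeq3k-L} that will exploit $v_1$ being closest to $p_1$ and $v_k$ closest to $p_2$.

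The argument is short and the arithmetic is routine; the one point that requires care — and the step I would check most carefully — is the verification that each concatenation really is a simple path, i.e. that no vertex is repeated. This is exactly where the Fan Lemma is used: it guarantees that the paths in $\mathcal{R}$ meet $P$ only at their prescribed endpoints $v_i$, so that replacing a prefix (or suffix) of $P$ by $R_1$ (or $R_k$) cannot create a repeated vertex. Once simplicity is secured, the inequalities are immediate from the fact that no path in $G$ can be longer than $P$.
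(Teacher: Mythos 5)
Your proof is correct and is essentially the paper's own argument: the concatenation of $R_1$ (from $q$ to $v_1$) with $P[v_1,p_2]$ is exactly the path $P-P[p_1,v_1]+R_1$ that the paper exhibits, and the comparison against $|P|$ via maximality is the same exchange. Your explicit verification that the concatenation is simple (using that internal vertices of $R_1$ avoid $V(P)$ and that $q\notin V(P)$ after the rotation step) merely spells out what the paper leaves implicit.
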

\begin{proof}
It suffices to note that $P-P[p_1,v_1]+R_1$
and $P-P[v_k,p_2]+R_k$ are paths.
\end{proof}

\begin{claim}\label{claim:Pvivi+1-RiRi+1}
For $1 \leq i \leq k-1$, we have~$\mbox{dist}_P(v_i,v_{i+1}) \geq |R_i|+|R_{i+1}|$.
\end{claim}
\begin{proof}
It suffices to note that $P-P[v_i,v_{i+1}]+R_i+R_{i+1}$ is a path.
\end{proof}

Finally, by Claims \ref{claim:RBgeq2}, \ref{claim:p1R1p2Rk}
and \ref{claim:Pvivi+1-RiRi+1},
\begin{eqnarray*}
L &=& |E(P)| \\
&=& \text{dist}_P(p_1,v_1) + \sum_{i=1}^{k-1}\text{dist}_P(v_i,v_{i+1}) +  \text{dist}_P(v_k,p_2) \\
&\geq& |R_1| + \sum_{i=1}^{k-1}{(|R_i|+|R_{i+1}|)} + |R_k| \\
&=& 2\sum_{i=1}^{k}{|R_i|} \\
&=&
2\sum_{R \in \mathcal{R_A}}|R_A|+ 
2\sum_{R \in \mathcal{R_B}}|R_B| \\
&\geq& 2|A|+4|B| \\
&=& 4k-2|A| \\
&\geq& 4k-2|X|.
\end{eqnarray*}

Hence,
$$
|V(P)\cap V(Q)| =|X| \geq 2k-L/2,$$
as we want.
\end{proof}






\section{Low connectivity}
\label{section:basic-concepts-treewidth}

In this section, we show Hippchen's Conjecture  for~$k=4$.
We begin by with a useful lemma.
\begin{lemma}\label{lemma:noDisjoint}
Let~$P$ and~$Q$ be two longest
paths in a graph~$G$.
Let~${u \in V(P) \cap V(Q)}$.
Let~$v$ be a vertex in 
${V(P) \setminus V(Q)}$
such that~$P[u,v]$
is internally disjoint from~$Q$.
Let~$w$ be a vertex in 
${V(Q) \setminus V(P)}$
such that~$Q[u,w]$
is internally disjoint from~$P$.
Then, there is no~$vw$-path
internally disjoint from both~$P$
and~$Q$.
\end{lemma}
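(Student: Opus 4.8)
The plan is to argue by contradiction. Suppose there were a $vw$-path $R$ that is internally disjoint from both $P$ and $Q$. Since $v \in V(P)\setminus V(Q)$ and $w \in V(Q)\setminus V(P)$ are distinct, $R$ has at least one edge, so $|R|\ge 1$. The idea is to splice $R$ into $P$ and, symmetrically, into $Q$, producing two new paths, and then to play the maximality of $P$ against the maximality of $Q$ to obtain two contradictory length inequalities.

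First I would cut each path at its two marked vertices. Deleting the interior of $P[u,v]$ from $P$ leaves two tails; call $p_1$ the endpoint of $P$ on the $u$-side and $p_2$ the endpoint on the $v$-side, so that $P$ is the concatenation of $P[p_1,u]$, $P[u,v]$, and $P[v,p_2]$. Symmetrically, let $q_1,q_2$ be the endpoints of $Q$ so that $Q$ is the concatenation of $Q[q_1,u]$, $Q[u,w]$, and $Q[w,q_2]$. Put $\alpha = \text{dist}_P(u,v)$ and $\beta = \text{dist}_Q(u,w)$.

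The first construction reroutes $P$ through $R$: follow $P[p_1,u]$, then $Q[u,w]$, then $R$ traversed from $w$ to $v$, then $P[v,p_2]$. Granting that this is a genuine path, its length is $\text{dist}_P(p_1,u)+\beta+|R|+\text{dist}_P(v,p_2)$, which cannot exceed $|P| = \text{dist}_P(p_1,u)+\alpha+\text{dist}_P(v,p_2)$ because $P$ is longest; hence $\beta + |R| \le \alpha$. The second construction is the mirror image, rerouting $Q$ through $R$: follow $Q[q_1,u]$, then $P[u,v]$, then $R$ traversed from $v$ to $w$, then $Q[w,q_2]$. Comparing its length with $|Q|$ gives $\alpha + |R| \le \beta$. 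Adding the two inequalities yields $2|R| \le 0$, contradicting $|R|\ge 1$, and the lemma follows.

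The main obstacle is the repeated verification that the two rerouted walks are honest (vertex-simple) paths rather than walks that revisit a vertex; the arithmetic above is otherwise immediate. This is precisely where all three disjointness hypotheses are spent. For the first walk, $P[p_1,u]$ and $P[v,p_2]$ are the two disjoint tails of $P$; the interior of $Q[u,w]$ avoids $V(P)$, so it meets the tails only at $u$, and its other endpoint satisfies $w\notin V(P)$; finally the interior of $R$ avoids both $V(P)$ and $V(Q)$, while its endpoints $v$ and $w$ act as the clean junctions with $P[v,p_2]$ and $Q[u,w]$, using $v\notin V(Q)$ and $w\notin V(P)$ to rule out accidental coincidences. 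The second walk is audited identically, with the roles of $P$ and $Q$ (and of $v$ and $w$) interchanged. I expect this disjointness check, not the length estimate, to be the only point demanding genuine care.
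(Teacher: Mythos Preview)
Your argument is correct and is essentially identical to the paper's: you build the same two rerouted paths $P-P[u,v]+Q[u,w]+R$ and $Q-Q[u,w]+P[u,v]+R$, and your two inequalities $\beta+|R|\le\alpha$ and $\alpha+|R|\le\beta$ added together are exactly the paper's observation that the two lengths sum to $|P|+|Q|+2|R|$. The paper simply asserts that the two walks are paths without the detailed disjointness audit you give, but the content is the same.
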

\begin{proof}
Suppose by contradiction that
there is a~$vw$-path~$R$
internally disjoint from~$P$
and~$Q$.
Note that~$P-P[u,v]+R+Q[u,w]$
and~$Q-Q[u,w]+R+P[u,v]$ are
both paths, whose lengths sum
$|P|+|Q|+2|R|$, a contradiction.
\end{proof}

We now proceed to prove the main result of this section.
\begin{theorem}\label{theorem:4connected-4vertices}
Every pair of longest paths in a 4-connected graph intersect
in at least four vertices.
\end{theorem}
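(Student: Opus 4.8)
The plan is to rule out the single possibility left open by the known cases. Since a $4$-connected graph is $3$-connected, Hippchen's result for $k=3$ \cite{Hippchen08} already gives $|V(P)\cap V(Q)|\geq 3$, so it suffices to show that the intersection cannot have size exactly three. First I would dispose of the Hamiltonian case: if $G$ is Hamiltonian then $|V(P)|=|V(Q)|=n\geq 5$, whence $V(P)=V(Q)$ and the intersection has at least five vertices. Thus I may assume $G$ is not Hamiltonian, so by Proposition~\ref{prop:diestel-exercise-paths} we have $L\geq 8$; in particular each of $P$ and $Q$ has at least six vertices outside the intersection. Now suppose, for contradiction, that $X:=V(P)\cap V(Q)=\{x_1,x_2,x_3\}$. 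Exactly as in the proof of Lemma~\ref{lemma:PcapQgeq3k-L}, a Pósa rotation applied to each endpoint lets me assume that no endpoint of $P$ or of $Q$ lies in $X$; since such a rotation preserves the vertex set of the path, $X$ is unchanged, and afterwards every $x_i$ is an internal vertex of both $P$ and $Q$. Consequently each $x_i$ has two $P$-neighbors and two $Q$-neighbors, and every private vertex of $P$ or of $Q$ lies in one of the four open segments cut out by $x_1,x_2,x_3$.

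Next I would produce an external bridge between the private parts. Writing $P^{\circ}=V(P)\setminus X$ and $Q^{\circ}=V(Q)\setminus X$, connectivity at least four guarantees that $G-X$ is connected, so a shortest $P^{\circ}$--$Q^{\circ}$ path $R$ in $G-X$ has all its internal vertices outside $V(P)\cup V(Q)$; that is, $R$ is a $vw$-path with $v\in P^{\circ}$, $w\in Q^{\circ}$, internally disjoint from both $P$ and $Q$. Each tip comes with a natural anchor: walking along $P$ from $v$ toward the nearest vertex of $X$ gives $u_P\in X$ with $P[u_P,v]$ internally disjoint from $Q$, and symmetrically walking along $Q$ from $w$ gives $u_Q\in X$ with $Q[u_Q,w]$ internally disjoint from $P$. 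If $u_P=u_Q$, then $u_P,v,w$ and the bridge $R$ contradict Lemma~\ref{lemma:noDisjoint}, and we are done.

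The crux is therefore the case in which $u_P\neq u_Q$ for \emph{every} external bridge. It is here that I would invoke the full strength of $4$-connectivity rather than mere connectedness of $G-X$. I would apply the Fan Lemma (Proposition~\ref{prop:fan-lemma}) from a suitable anchor $x_i$ to the set $S=(V(P)\cup V(Q))\setminus\{x_i\}$: this yields four internally disjoint $x_i$--$S$ paths whose internal vertices avoid all of $V(P)\cup V(Q)$ and whose four distinct endpoints lie in $S$. Since only the two vertices of $X\setminus\{x_i\}$ lie in $S$ besides the private ones, at least two of these endpoints are private. Using these additional pairwise disjoint external routes together with the segments of $P$ and $Q$, I would either relocate the bridge so that its two anchors coincide (reducing to the previous paragraph) or directly splice an external route into $P$ and $Q$ to assemble a simple path of length greater than $L$, contradicting maximality. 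The argument branches according to the relative order in which $x_1,x_2,x_3$ appear along $P$ versus along $Q$, which up to reversing $Q$ gives three essentially distinct configurations.

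The main obstacle I expect is precisely this ordering case analysis. When I reuse an initial segment of $P$ and a terminal segment of $Q$ joined by an external route, whether the resulting walk is a \emph{simple} path depends on whether one of $x_2,x_3$ happens to lie inside a reused segment; ruling this out, or rerouting around it, must be checked in every configuration and for each position of the private endpoints delivered by the Fan Lemma. This is exactly the point at which $k=4$ does strictly more than $k=3$: the \emph{fourth} disjoint external route provides enough spare material to steer the reroute clear of the shared vertices $x_2,x_3$ in each case, whereas its unavailability at $k=3$ is what permits the intersection to be as small as three and makes that bound tight.
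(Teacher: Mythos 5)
Your reductions are fine as far as they go: the appeal to the $k=3$ result, the Pósa rotations to push the endpoints of $P$ and $Q$ out of $X$ (valid here because $|X|=3<4$ rules out the other branch of that argument), the existence of a bridge $R$ between the private parts in $G-X$, and the application of Lemma~\ref{lemma:noDisjoint} when the two anchors coincide are all correct. But the theorem is not proved, because the entire case $u_P\neq u_Q$ --- which you yourself identify as the crux, and which is exactly where $4$-connectivity must do more than $3$-connectivity --- is left as a plan (``relocate the bridge \ldots or directly splice \ldots the argument branches'') rather than an argument. Worse, the specific tool you propose for that case does not work: applying the Fan Lemma from $x_i$ to $S=(V(P)\cup V(Q))\setminus\{x_i\}$ can return four paths that are simply the four edges from $x_i$ to its two $P$-neighbors and its two $Q$-neighbors (these all lie in $S$), so the fan may contain no external vertices whatsoever and yields no ``spare material'' to reroute with. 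There is also a structural restriction in your setup: by taking a shortest $P^{\circ}$--$Q^{\circ}$ path you only ever consider bridges joining a private piece of $P$ to a private piece of $Q$, whereas the configuration that ultimately has to be exploited (or excluded) may join two private pieces of the \emph{same} path.

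The idea your proposal is missing is the one the paper's proof is built on: when $|X|=3$, the corresponding segments of $P$ and $Q$ between consecutive common vertices have \emph{equal lengths}, so swapping them produces a whole family of longest paths ($P-P_a+Q_a$, $Q-Q_a+P_a$, etc.). Lemma~\ref{lemma:noDisjoint} applied to pairs from this larger family --- not just to $P$ and $Q$ --- forbids bridges between many pairs of private segments, including pairs with distinct anchors and pairs on the same path; the paper encodes these prohibitions as independent sets in an auxiliary $8$-vertex graph $H$ whose vertices are the private segments. Four-connectivity enters only through the fact that $G-\{a,b,c\}$ is connected, hence $H$ is connected, which forces an edge of $H$ between an initial segment and a final segment (e.g.\ $P'_a P'_c$); that bridge, combined with one segment swap, produces two paths whose lengths sum to $|P|+|Q|+2|R|>2L$, the desired contradiction. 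Without the segment-swap observation (which your proposal never makes) there is no known way to dispose of the $u_P\neq u_Q$ case, so the gap is essential, not cosmetic.
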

\begin{proof}
Let~$G$ be a 4-connected graph and let~$P$ and~$Q$ be two longest
paths in~$G$. Suppose by contradiction that~$P$ and~$Q$ do not
intersect in at least four vertices. As~$G$ is 3-connected,~$P$
and~$Q$ intersect in at least three vertices {\cite[Lemma 2.2.3]{Hippchen08}}. Hence,
$P$ and~$Q$ intersect in exactly three vertices, say~$a,b$ and~$c$. 
Let~$p_1$ and~$p_2$ be the extremes of~$P$.
Suppose, without loss of generality, that~$abc$ is a subsequence in~$P$ considering the sequence of~$P$ starting at~$p_1$.
That is,~$\text{dist}_P(p_1,a)<\text{dist}_P(p_1,b)<\text{dist}_P(p_1,c)$.

For simplicity of notation, we let~$P_a:=P[p_1,a]$,
$P_{ab}:=P[a,b]$,~$P_{bc}:=P[b,c]$ and
$P_{c}:=P[c,p_2]$.
We also let~$S:=\{a,b,c\}$,
$G':=G \setminus S$,
$P':=P \setminus S$,
$P'_a:=P_a \setminus S$,
$P'_{ab}:=P_{ab} \setminus S$,
$P'_{bc}:=P_{bc} \setminus S$ and
$P'_c:=P_c \setminus S$.
Without loss of generality, we have two cases,
depending on the order in which~$a,b$ and~$c$
appear in~$Q$.
In each of these cases, we assume similar notation
to the subpaths of~$Q$ and~$Q \setminus S$ as
we did for~$P$.

\begin{description}

\item [Case 1.]
$abc$ is a subsequence in~$Q$.

It is easy to see that
$|P_a|=|Q_a|$,
$|P_{ab}|=|Q_{ab}|$,
$|P_{bc}|=|Q_{bc}|$, and
$|P_c|=|Q_c|$.
Hence, 
$P-P_a+Q_a$,
$Q-Q_a+P_a$,
$P-P_{ab}+Q_{ab}$,
$Q-Q_{ab}+P_{ab}$,
$P-P_{bc}+Q_{bc}$,
$Q-Q_{bc}+P_{bc}$, 
$P-P_c+Q_c$ and~$Q-Q_c+P_c$ are longest paths.

Let~$H$ be an auxiliary graph given by
$V(H)=\{P'_a,P'_{ab},P'_{bc},
P'_c,Q'_a,Q'_{ab},Q'_{bc},Q'_c\}$
and
$E(H)=\{XY:\mbox{there is
a }X\mbox{-}Y\mbox{ path in }G' \}$.
By Lemma \ref{lemma:noDisjoint},
the next sets are independent sets in
$H$:~$\{P'_a,Q'_a,P'_{ab},Q'_{ab}\}$,
$\{P'_{ab},Q'_{ab},P'_{bc},Q'_{bc}\}$,
and
$\{P'_{bc},Q'_{bc}, P'_c,Q'_c\}$.
As~$G$ is 4-connected,
the graph~$G'=G \setminus S$
is connected, which implies that~$H$ is connected.
Thus, in~$H$, there is
a
$\{P'_a,Q'_a,P'_{bc},Q'_{bc}\}$-$\{P'_{ab},Q'_{ab},P'_{c},Q'_{c}\}$
path.
Hence, one of~$\{P'_aP'_c,
P'_aQ'_c,Q'_aP'_c,Q'_aQ'_c\}$
is an edge of~$H$. Without loss of
generality, we may assume that~$P'_aP'_c \in E(H)$.

This implies that there exists
a~$P'_a$-$P'_c$ path, say~$R$, in~$G'$. 
Let~$\{x\}= V(R) \cap V(P'_a)$
and~$\{y\}= V(R) \cap V(P'_c)$.
Let~$P_x$ and~$P_{xa}$ be the
corresponding subpaths of~$P_a$.
Let~$P_y$ and~$P_{yc}$ be the
corresponding subpaths of~$P_c$.
Then~$P-P_{xa}-P_y + R + Q_a$ and
$Q-Q_{a}+P_{xa} + R + P_y$
are both paths, whose lengths sum
$|P|+|Q|+2|R|$, a contradiction
(See Fig. \ref{fig:theorem-4-connected-4vertices}(a)).

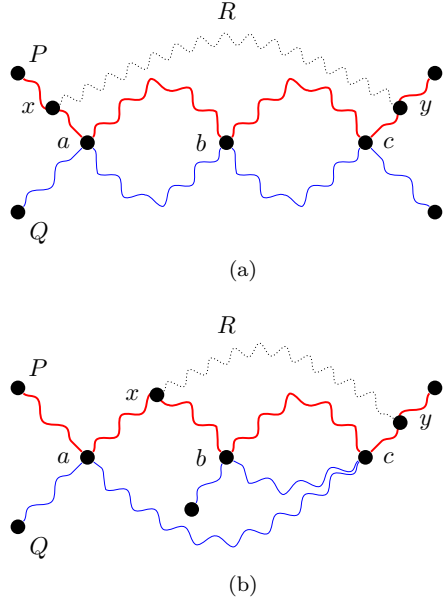
\begin{figure}[htp]
\centering
\subfloat[\label{fig:subim1}]{%
  \resizebox{.5\textwidth}{!}{\begin{tikzpicture}
[scale=0.5, label distance=3pt, every node/.style={fill,circle,inner sep=0pt,minimum size=6pt}]
 
    \node at
    (2,2)[myblue,label=left :$a$,fill=black, circle](a) {};
    \node at
    (6,2)[myblue,label=left :$b$,fill=black, circle](b) {};
     \node at
    (10,2)[myblue,label=right :$c$,fill=black, circle](c) {};
   
    \node at (0,4)[myblue,label=above right:$P$,fill=black, circle](a0) {};
    \node at (12,4)[myblue,label=above right:$ $,fill=black, circle](c0) {};
    \node at (12,0)[myblue,label=below right:$ $,fill=black, circle](c1) {};
    \node at (0,0)[myblue,label=below right :$Q$,fill=black, circle](a1) {};
  
    \draw [decorate, decoration=snake, segment length=0.6cm,color=red, thick] (a0) -- (a);
    \draw [decorate, decoration=snake, segment length=0.6cm,color=blue] (a1) -- (a);
    
    \draw [decorate, decoration=snake, segment length=0.5cm,color=red, thick] (a) .. controls (4, 4)  .. (b);
       \draw [decorate, decoration=snake, segment length=0.5cm,color=blue] (b) .. controls (4, 0)  .. (a);
      \draw[decorate, decoration=snake, segment length=0.5cm,color=red, thick] (b) .. controls (8, 4)  .. (c);      
      \draw[decorate, decoration=snake, segment length=0.5cm,color=blue] (c) .. controls (8, 0) .. (b);

    \draw[decorate, decoration=snake, segment length=0.6cm,color=red, thick] (c0) -- (c);

    \draw[decorate, decoration=snake, segment length=0.6cm,color=blue] (c1) -- (c);
    
  \node at
    (1,3)[myblue,label=left :$x$,fill=black, circle](x) {};
  \node at
    (11,3)[myblue,label=right :$y$,fill=black, circle](y) {};
    
      \node at
    (6,5)[myblue,label=above :$R$,fill=none, circle](R) {};
    
\draw [decorate, decoration=snake, densely dotted] (x) .. controls (6, 5.5) .. (y);

\end{tikzpicture}}
}\hfil
\subfloat[\label{fig:subim2}]{%
  \resizebox{.5\textwidth}{!}{\begin{tikzpicture}
[scale=0.5, label distance=3pt, every node/.style={fill,circle,inner sep=0pt,minimum size=6pt}]
 
    \node at
    (2,2)[myblue,label=left :$a$,fill=black, circle](a) {};
    \node at
    (6,2)[myblue,label=left :$b$,fill=black, circle](b) {};
     \node at
    (10,2)[myblue,label=right :$c$,fill=black, circle](c) {};
   
    \node at (0,4)[myblue,label=above right:$P$,fill=black, circle](a0) {};
    \node at (12,4)[myblue,label=above right:$ $,fill=black, circle](c0) {};
    \node at (5,0.5)[myblue,label=below right:$ $,fill=black, circle](c1) {};
    \node at (0,0)[myblue,label=below right :$Q$,fill=black, circle](a1) {};
      
        \draw [decorate, decoration=snake, segment length=0.6cm,color=red, thick] (a0) -- (a);
    \draw [decorate, decoration=snake, segment length=0.5cm,color=red, thick] (a) .. controls (4, 4)  .. (b);
      \draw[decorate, decoration=snake, segment length=0.5cm,color=red, thick] (b) .. controls (8, 4)  .. (c);      
    \draw[decorate, decoration=snake, segment length=0.6cm,color=red, thick] (c0) -- (c);      
      
      \draw[decorate, decoration=snake, segment length=0.5cm,color=blue] (c) .. controls (8, 1) .. (b);
             \draw [decorate, decoration=snake, segment length=0.5cm,color=blue] (c) .. controls (6, -1)  .. (a);
    \draw[decorate, decoration=snake, segment length=0.6cm,color=blue] (c1) -- (b);
        \draw [decorate, decoration=snake, segment length=0.6cm,color=blue] (a1) -- (a);
    
  \node at
    (4,3.8)[myblue,label=left :$x$,fill=black, circle](x) {};
  \node at
    (11,3)[myblue,label=right :$y$,fill=black, circle](y) {};
    
      \node at
    (6,5)[myblue,label=above :$R$,fill=none, circle](R) {};
    
\draw [decorate, decoration=snake, densely dotted] (x) .. controls (7, 5.5) .. (y);

\end{tikzpicture}}
}

\caption{Cases in the proof of Theorem \ref{theorem:4connected-4vertices}}
\label{fig:theorem-4-connected-4vertices}

\end{figure}

\item [Case 2.]
$acb$ is a subsequence in~$Q$.

It is easy to see that
$|P_a|=|Q_a|$ and
$|P_{bc}|=|Q_{bc}|$.
Hence, 
$P-P_a+Q_a$,
$Q-Q_a+P_a$,
$P-P_{ab}+Q_{ab}$, and
${Q-Q_{ab}+P_{ab}}$
are longest paths.
Let~$H$ be an auxiliary graph given by
$V(H)=\{P'_a,P'_{ab},P'_{bc},
P'_c,Q'_a,Q'_{ac},Q'_{cb},Q'_b\}$
and
$E(H)=\{XY:\mbox{there is
a }X\mbox{-}Y\mbox{ path in }G' \}$.
By Lemma \ref{lemma:noDisjoint},
the next sets are independent sets in
$H$:~$\{P'_a,Q'_a,P'_{ab},Q'_{ac}\}$,$\{P'_{ab},Q'_{b},P'_{bc},Q'_{bc}\}$, and $\{P'_{bc},Q'_{bc}, P'_c,Q'_{ac}\}$.
Thus, as~$H$ is connected,~$P'_{ab}P'_c \in E(H)$.
This implies that there exists
a~$P'_{ab}$-$P'_c$ path, say~$R$, in~$G'$. 
Let~${\{x\}= V(R) \cap V(P'_{ab})}$
and~${\{y\}= V(R) \cap V(P'_c)}$.
Let~$P_{ax}$ and~$P_{xb}$ be the corresponding
subpaths of~$P_{ab}$.
Let~$P_y$ and~$P_{yc}$ be the two
corresponding subpaths of~$P_c$.
Then~$P-P_{xb}-P_{bc}-P_y + R + Q_b+Q_{bc}$ and
$Q-Q_b-Q_{bc}+P_{xb} + R + P_{bc}+P_y$
are both paths, whose lengths sum
$|P|+|Q|+2|R|$, a contradiction
(See Fig. \ref{fig:theorem-4-connected-4vertices}(b)).

\end{description}
With that, we finish the proof of Theorem \ref{theorem:4connected-4vertices}.
\end{proof}


\section{Tight families}
As mentioned by Hippchen
\cite[Figure 2.5]{Hippchen08}, the graph
$K_{k,2k+2}$ makes the conjecture tight.
In this section, we show that in fact there is
an infinite family of graphs, for every $k$, that
make Hippchen's Conjecture tight.

\begin{theorem}\label{theorem:tightFamilies}
For every $k$-connected graph, there
exists an infinite family of graphs
with a pair of longest paths intersecting each other
in exactly $k$ vertices.
\end{theorem}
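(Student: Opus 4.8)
The plan is to exhibit, for each fixed $k$, the infinite family $\{K_{k,m} : m \geq 2k+2\}$ of complete bipartite graphs, generalizing Hippchen's single tight example $K_{k,2k+2}$. Denote by $S$ the side of size $k$ and by $T$ the side of size $m$. First I would record that $K_{k,m}$ is $k$-connected whenever $m \geq k$, since the connectivity of a complete bipartite graph $K_{a,b}$ equals $\min\{a,b\}$; hence every member of the family is $k$-connected. Moreover, removing $S$ leaves $m \geq 2$ isolated vertices, so none of these graphs is Hamiltonian.

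Next I would pin down the structure of the longest paths. Any path in a bipartite graph alternates between the two sides, so a path using $s$ vertices of $S$ and $t$ vertices of $T$ satisfies $|s-t| \leq 1$; combined with $s \leq k$ this forces $s + t \leq 2k+1$, with equality only when $s = k$ and $t = k+1$. Since a path of the form $u_0 s_1 u_1 s_2 \cdots s_k u_k$ using $k$ vertices of $S$ and $k+1$ vertices of $T$ realizes $2k+1$ vertices (length $2k$, matching Proposition~\ref{prop:diestel-exercise-paths}), every longest path has exactly this shape and therefore contains all $k$ vertices of $S$. Consequently any two longest paths intersect in at least the whole of $S$, giving an intersection of at least $k$ vertices.

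It then remains to realize equality. Because $m \geq 2k+2 = 2(k+1)$, I can choose two disjoint $(k+1)$-subsets $\{u_0,\dots,u_k\}$ and $\{w_0,\dots,w_k\}$ of $T$ and form the two longest paths $u_0 s_1 u_1 \cdots s_k u_k$ and $w_0 s_1 w_1 \cdots s_k w_k$ through the same enumeration $s_1,\dots,s_k$ of $S$. These share every vertex of $S$ and no vertex of $T$, so their intersection is exactly $S$, of size $k$. As $m$ ranges over all integers at least $2k+2$, this produces infinitely many pairwise non-isomorphic graphs, completing the family.

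The main obstacle I anticipate is the characterization step: one must argue that no longest path can avoid any vertex of $S$, which relies on the tightness of the alternation bound $s+t \leq 2k+1$ together with checking that the value $2k$ is actually attained, so that no path of a different shape is longer. The disjointness requirement in the final step is precisely what forces $|T| \geq 2(k+1)$, i.e. $m \geq 2k+2$, recovering Hippchen's threshold; once the alternation analysis is in place, verifying this is routine.
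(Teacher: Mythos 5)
Your proof is correct, but it uses a genuinely different family from the paper's. The paper keeps the number of components attached to the separator fixed at $2(k+1)$ and instead grows their size: it takes a set $S$ of $k$ vertices joined completely to $2(k+1)$ vertex-disjoint paths on $\ell$ vertices each, and lets $\ell \to \infty$; the two longest paths then traverse $S$ together with the $k+1$ ``$a$-paths'' (respectively ``$b$-paths''), meeting exactly in $S$. Your family $\{K_{k,m} : m \geq 2k+2\}$ is the opposite degeneration: each component of $G - S$ is a single vertex, and infinitude comes from growing the number of components rather than their size (the paper's construction at $\ell = 1$ is exactly $K_{k,2k+2}$, the one member your family shares with it). The trade-off: your analysis is cleaner, since bipartite alternation immediately gives the bound $s+t \leq 2k+1$ and pins down the structure of every longest path, whereas the paper needs the slightly more general counting argument that a path can visit at most $k+1$ components of $G-S$, each of size $\ell$; on the other hand, the paper's family exhibits tight examples whose longest paths are arbitrarily long, so the intersection size $k$ can be made an arbitrarily small fraction of the path length, which your family (longest paths frozen at $2k+1$ vertices) cannot show. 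Both constructions also differ in how $k$-connectivity is verified: yours quotes the standard formula $\kappa(K_{a,b}) = \min\{a,b\}$, while the paper argues directly that no set of $k-1$ vertices can disconnect the graph because some vertex of $S$ survives and is adjacent to everything outside $S$. Your proof is complete as it stands.
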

\begin{proof}
Let $\ell$ be an arbitrary positive integer.
Let $S=\{s_1,s_2,\ldots, s_k\}$,
and, for every $i\in [k+1]$, let $X_i=\{a_{i1},a_{i2},\ldots,a_{i\ell}\}$ and
$Y_i=\{b_{i1},b_{i2},\ldots, b_{i\ell}\}$.
Let $V(G)=S \cup \{X_i: i \in [k+1]\} \cup \{Y_i: i \in [k+1]\}$,
and 
$E(G)=
\{sv: s \in S, v \in V(G) \setminus S \}
\cup \{a_{ij}a_{i(j+1)}: i \in [k+1], j \in [\ell-1] \}
\cup \{b_{ij}b_{i(j+1)}: i \in [k+1], j \in [\ell-1]  \}
$ 
(See Fig. \ref{figure:tight-families}).

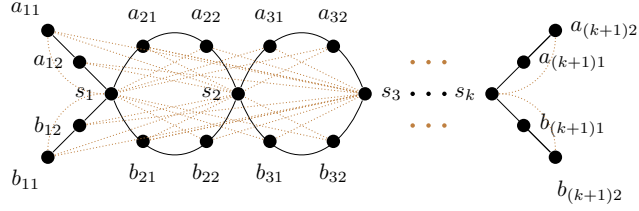
\begin{figure}
\centering
\resizebox{.7\textwidth}{!}{
\begin{tikzpicture}
[scale=0.5, label distance=3pt, every node/.style={fill,circle,inner sep=0pt,minimum size=6pt}]
 
    \node at
    (2,2)[myblue,label=left :$s_1$,fill=black, circle](s1) {};
    \node at
    (6,2)[myblue,label=left :$s_2$,fill=black, circle](s2) {};
     \node at
    (10,2)[myblue,label=right :$s_3$,fill=black, circle](s3) {};
     \node at
    (11.5,2)[myblue,minimum size=2pt,label=above :$ $,fill=black, circle](dot1) {};
      \node at
    (12,2)[myblue,minimum size=2pt,label=above :$ $,fill=black, circle](dot2) {};
      \node at
    (12.5,2)[myblue,minimum size=2pt,label=above :$ $,fill=black, circle](dot3) {};
    
      \node at (11.5,3)[brown,minimum size=2pt,label=above :$ $,fill=brown, circle](dotbrown1) {};
      \node at
    (12,3)[brown,minimum size=2pt,label=above :$ $,fill=brown, circle](dotbrown2) {};
      \node at
    (12.5,3)[brown,minimum size=2pt,label=above :$ $,fill=brown, circle](dotbrown3) {};
    
     \node at (11.5,1)[brown,minimum size=2pt,label=above :$ $,fill=brown, circle](dotbrown4) {};
      \node at
    (12,1)[brown,minimum size=2pt,label=above :$ $,fill=brown, circle](dotbrown5) {};
      \node at
    (12.5,1)[brown,minimum size=2pt,label=above :$ $,fill=brown, circle](dotbrown5) {};
    
    \node at
    (14,2)[myblue,label=left :$s_k$,fill=black, circle](sk) {};
    \node at (0,4)[myblue,label=above left:$a_{11}$,fill=black, circle](a1) {};
    \node at (16,4)[myblue,label=right:$a_{(k+1)2}$,fill=black, circle](bkm1) {};
    \node at (16,0)[myblue,label=below right:$b_{(k+1)2}$,fill=black, circle](dkm1) {};
    \node at (0,0)[myblue,label=below left :$b_{11}$,fill=black, circle](c1) {};
    \node at
    (1,3)[myblue,label=left :$a_{12}$,fill=black, circle](b1) {};
    \node at
    (15,3)[myblue,label=right :$a_{(k+1)1}$,fill=black, circle](akm1) {};
    \node at
    (1,1)[myblue,label=left :$b_{12}$,fill=black, circle](d1) {};\node at
    (15,1)[myblue,label=right :$b_{(k+1)1}$,fill=black, circle](ckm1) {};
    
     \node at
    (3,3.5)[myblue,label=above :$a_{21}$,fill=black, circle](a2) {};
    \node at
    (5,3.5)[myblue,label=above :$a_{22}$,fill=black, circle](b2) {};
         \node at
    (3,0.5)[myblue,label=below :$b_{21}$,fill=black, circle](c2) {};
    \node at
    (5,0.5)[myblue,label=below :$b_{22}$,fill=black, circle](d2) {};
    
         \node at
    (7,3.5)[myblue,label=above :$a_{31}$,fill=black, circle](a3) {};
    \node at
    (9,3.5)[myblue,label=above :$a_{32}$,fill=black, circle](b3) {};
         \node at
    (7,0.5)[myblue,label=below :$b_{31}$,fill=black, circle](c3) {};
    \node at
    (9,0.5)[myblue,label=below :$b_{32}$,fill=black, circle](d3) {};

    \draw (a1) -- (b1);
    \draw (b1) -- (s1);
    \draw (c1) -- (d1);
    \draw (d1) -- (s1);
    
    \draw (s1) .. controls (3, 4.5) and (5, 4.5) .. (s2);
       \draw (s1) .. controls (3, -0.5) and (5, -0.5) .. (s2);
      \draw (s2) .. controls (7, 4.5) and (9, 4.5) .. (s3);      
      \draw (s2) .. controls (7, -0.5) and (9, -0.5) .. (s3);
    
     \draw (akm1) -- (bkm1);
    \draw (bkm1) -- (sk);
    \draw (ckm1) -- (dkm1);
    \draw (dkm1) -- (sk);
    
   \draw [densely dotted] [color=brown] [style = thin][line width=0.45pt]  (a1) to [bend left = -45] (s1);
 \draw [densely dotted] [color=brown] [style = thin][line width=0.45pt]  (a1) to (s2);  
  \draw [densely dotted] [color=brown] [style = thin][line width=0.45pt]  (a1) to (s3);  
   \draw [densely dotted] [color=brown] [style = thin][line width=0.45pt]  (a2) to (s2);  
  \draw [densely dotted] [color=brown] [style = thin][line width=0.45pt]  (a2) to (s3);   
  \draw [densely dotted] [color=brown] [style = thin][line width=0.45pt]  (a2) to (s3);  
     \draw [densely dotted] [color=brown] [style = thin][line width=0.45pt]  (b1) to (s2);
 \draw [densely dotted] [color=brown] [style = thin][line width=0.45pt]  (b1) to (s3);   
   \draw [densely dotted] [color=brown] [style = thin][line width=0.45pt]  (d1) to (s2);
  \draw [densely dotted] [color=brown] [style = thin][line width=0.45pt]  (d1) to (s3);
    \draw [densely dotted] [color=brown] [style = thin][line width=0.45pt]  (b2) to (s1);   
  \draw [densely dotted] [color=brown] [style = thin][line width=0.45pt]  (b2) to (s3);
    \draw [densely dotted] [color=brown] [style = thin][line width=0.45pt]  (a3) to (s1);   
  \draw [densely dotted] [color=brown] [style = thin][line width=0.45pt]  (a3) to (s3);    
      \draw [densely dotted] [color=brown] [style = thin][line width=0.45pt]  (b3) to (s1);   
  \draw [densely dotted] [color=brown] [style = thin][line width=0.45pt]  (b3) to (s2);

   \draw [densely dotted] [color=brown] [style = thin][line width=0.45pt]  (c1) to [bend left = 45] (s1);
 \draw [densely dotted] [color=brown] [style = thin][line width=0.45pt]  (c1) to (s2);  
  \draw [densely dotted] [color=brown] [style = thin][line width=0.45pt]  (c1) to (s3);  
   \draw [densely dotted] [color=brown] [style = thin][line width=0.45pt]  (c2) to (s2);  
  \draw [densely dotted] [color=brown] [style = thin][line width=0.45pt]  (c2) to (s3);   
  \draw [densely dotted] [color=brown] [style = thin][line width=0.45pt]  (c2) to (s3);  
    \draw [densely dotted] [color=brown] [style = thin][line width=0.45pt]  (d2) to (s1);   
  \draw [densely dotted] [color=brown] [style = thin][line width=0.45pt]  (d2) to (s3);
    \draw [densely dotted] [color=brown] [style = thin][line width=0.45pt]  (c3) to (s1);   
  \draw [densely dotted] [color=brown] [style = thin][line width=0.45pt]  (c3) to (s3);    
      \draw [densely dotted] [color=brown] [style = thin][line width=0.45pt]  (d3) to (s1);   
  \draw [densely dotted] [color=brown] [style = thin][line width=0.45pt]  (d3) to (s2);
  
    \draw [densely dotted] [color=brown] [style = thin][line width=0.45pt]  (sk) to [bend left = -45] (bkm1);
      \draw [densely dotted] [color=brown] [style = thin][line width=0.45pt]  (sk) to [bend left = 45] (dkm1);

\end{tikzpicture}}

\caption{The graph used in the construction of Theorem \ref{theorem:tightFamilies} in the case $\ell=2$.} \label{figure:tight-families}
\end{figure}

Note that $S$ is a separator in $G$, and that every component of
$G-S$ has size $\ell$. Hence, any path in $G$ has at most $k+\ell(k+1)$ vertices.
Then $P=a_{11}a_{12} \cdots a_{1\ell}s_1a_{21}a_{22} \cdots a_{2\ell}s_2 \cdots a_{k1}a_{k2} \cdots a_{k\ell}s_ka_{(k+1)1}a_{(k+1)2} \cdots a_{(k+1)\ell}$
and \newline
$Q=b_{11}b_{12} \cdots b_{1\ell}s_1b_{21}b_{22} \cdots b_{2\ell}s_2 \cdots b_{k1}b_{k2} \cdots b_{k\ell}s_kb_{(k+1)1}b_{(k+1)2} \cdots b_{(k+1)\ell}$
are both longest paths,
intersecting each other in exactly $k$ vertices.

To finish the proof, we will show that $G$ is $k$-connected.
Suppose by contradiction that $G$ has a separator $S'$ of cardinality at most $k-1$.
As $|S|=k$, there exists a vertex $s \in S \setminus S'$.
But then $G-S'$ is connected, as $s$ is a universal vertex in $G-S'$,
a contradiction.
\end{proof}


\section{Conclusions and future work}
 \label{section:conclusion}

In this paper, we showed that every pair of longest paths in a~$k$-connected graph intersect each other in at least~$(8k-n+2)/5$ vertices.
A direct corollary of this result is that,
if~$k \geq n/3$, then every pair of longest paths intersect in at least~$k$ vertices; and, if~$k \geq n/4$, then every pair of longest paths intersect in at least~$\frac{4(k+1)}{5}$ vertices. 
For the latter, we propose the following weak version of Hippchen's Conjecture.

\begin{conjecture}\label{conj:densekn4}
Let~$G$ be a~$k$-connected graph on~$n$ vertices.
If~$k \geq n/4$ then every pair of longest paths intersect in at least~$k$ vertices.
\end{conjecture}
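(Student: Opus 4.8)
Since the statement is posed as a conjecture, the plan below is a genuine attack route rather than a finished argument, and I will be explicit about where it is expected to stall. Throughout write $X = V(P)\cap V(Q)$, $m=|X|$, and let $L$ be the length of a longest path. First I would dispose of the Hamiltonian case: if $G$ is Hamiltonian then $P$ and $Q$ each contain all $n$ vertices, so $m=n\ge 4k\ge k$ trivially. Hence assume $G$ is not Hamiltonian, so $L\ge 2k$ by Proposition~\ref{prop:diestel-exercise-paths}. Now combine the two bounds already available: Proposition~\ref{prop:PcapQgeq2L+2-n} gives $m\ge 2L+2-n$, which is already $\ge k$ as soon as $L\ge (n+k-2)/2$; and Lemma~\ref{lemma:PcapQgeq3k-L} gives $m\ge 2k-L/2$, which is $\ge k$ exactly at $L=2k$ but degrades for larger $L$. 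Under $k\ge n/4$ the only regime not yet covered is therefore the middle band $2k\le L< (n+k-2)/2$, whose critical point sits near $L\approx \tfrac{12k-4}{5}$. Note that the tight family of Theorem~\ref{theorem:tightFamilies} with $\ell=1$ realizes precisely $L=2k$, $m=k$ and $n\approx 3k$, so the conjecture is sharp in this range and we can only hope to prove $m\ge k$, not $m>k$.

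The cleanest reduction is to show that it suffices to sharpen the constant in Lemma~\ref{lemma:PcapQgeq3k-L} from $2k$ to $\tfrac94 k$. Indeed, suppose one could prove
\[
|V(P)\cap V(Q)|\ \ge\ \tfrac94 k-\tfrac{L}{2}.
\]
The two lower bounds $2L+2-n$ and $\tfrac94 k-\tfrac L2$ coincide at $L=\tfrac{9k}{10}+\tfrac{2(n-2)}{5}$, where their common value is $\tfrac{9k-n+2}{5}$, and under $n\le 4k$ this equals at least $\tfrac{5k+2}{5}=k+\tfrac25>k$. Thus the strengthened lemma, fed into the same $\max$ as in the proof of the main theorem, would yield the conjecture for all $k\ge n/4$ (in fact for $k\ge (n-2)/4$). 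So the entire problem is localized to an additive improvement of $\tfrac14 k$ in the constant of one lemma.

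Tracing this target back through the proof of Lemma~\ref{lemma:PcapQgeq3k-L}, the bound $m\ge 2k-L/2$ comes from $L\ge 4k-2|A|$ with $|A|\le m$, which in turn comes from $L\ge 2\sum_i|R_i|\ge 2|A|+4|B|$ using $|R|\ge 1$ on $\mathcal R_A$ and $|R|\ge 2$ on $\mathcal R_B$ (Claim~\ref{claim:RBgeq2}), together with $|A|+|B|=k$. To reach $L\ge \tfrac92 k-2|A|$ I need to extract an extra $\tfrac k4$ of total length, i.e.\ $\sum_i|R_i|\ge \tfrac94 k-|A|$ rather than $2k-|A|$. The two natural sources of this surplus are: (i) upgrading Claim~\ref{claim:RBgeq2} to show that a constant fraction of the $\mathcal R_B$-paths have length $\ge 3$ — their $P$-endpoints lie in $V(P)\setminus V(Q)$ and their interiors avoid $V(P)$, so one would argue that too many length-$2$ such paths create a forbidden configuration via Lemma~\ref{lemma:noDisjoint}; and (ii) running the fan from \emph{both} endpoints of $Q$ (after Pósa rotations remove them from $X$), and symmetrically from the endpoints of $P$, so as to bound a collection of roughly $2k$ internally disjoint connections against $2L$ instead of $L$. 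The hard part — and the reason the conjecture is open — is exactly this step: the two fans, and the Pósa rotations used to place their apexes outside $X$, can share vertices, so controlling the double-counting while preserving enough disjoint length to gain the additive $\tfrac k4$ uniformly in $k$ is delicate, and a naive combination loses precisely the factor one is trying to win.

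As an alternative (or a complement for the residual configurations), I would try to lift the auxiliary-graph method of Theorem~\ref{theorem:4connected-4vertices} to general $k$. Assuming $m\le k-1$, the vertices of $X$ cut $P$ into $m+1$ open segments and $Q$ into $m+1$ open segments; since only $m<k$ vertices are deleted, $G-X$ is $(k-m)$-connected, hence connected, and one builds the analogue of $H$ on these $\le 2(m+1)$ segments with edges recording the existence of connecting paths in $G-X$. Lemma~\ref{lemma:noDisjoint} again forbids certain crossings, and connectivity of $H$ would force an edge whose corresponding path, spliced as in Case~1 and Case~2, produces two paths of total length exceeding $|P|+|Q|$ — the desired contradiction. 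The obstacle here is combinatorial explosion: for general $k$ the two cyclic orders of $X$ along $P$ and along $Q$ admit exponentially many interleavings, and it is unclear that Lemma~\ref{lemma:noDisjoint} alone supplies enough independent sets in $H$ to force a fatal crossing in every interleaving. Making either the length-surplus argument or the auxiliary-graph argument work uniformly in $k$ is the crux, and is what keeps both this weak form and Hippchen's full conjecture open.
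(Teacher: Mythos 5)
The statement you are addressing is one of the paper's concluding conjectures: the paper itself offers no proof of it, so there is nothing to compare your attempt against except the two ingredients the paper does prove (Proposition \ref{prop:PcapQgeq2L+2-n} and Lemma \ref{lemma:PcapQgeq3k-L}), and you have correctly identified that these alone do not suffice. Your bookkeeping is sound: the bound $2L+2-n$ handles $L\geq (n+k-2)/2$, the bound $2k-L/2$ handles $L\leq 2k$, and under $n\leq 4k$ a genuine gap remains in the band $2k<L<(n+k-2)/2$. Your reduction is also correct and is the most useful part of the proposal: if the constant in Lemma \ref{lemma:PcapQgeq3k-L} could be raised from $2k$ to $\tfrac94 k$, then the crossing value of the two lower bounds becomes $(9k-n+2)/5\geq k+\tfrac25$ whenever $n\leq 4k$, which would settle the conjecture. (One slip: in the Hamiltonian case you wrote $m=n\geq 4k$; the hypothesis $k\geq n/4$ gives $n\leq 4k$, not $n\geq 4k$, though $n\geq k+1$ from $k$-connectivity already yields $m\geq k$ there.)

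However, the proposal is not a proof, and you say so yourself. Both routes you sketch for gaining the extra $\tfrac k4$ --- (i) showing that a constant fraction of the $\mathcal{R}_B$-paths have length at least $3$, and (ii) running fans from both ends of $Q$ (and of $P$) after P\'osa rotations --- are left entirely unexecuted, and the obstruction you name (uncontrolled sharing of vertices between the fans and the rotated endpoints) is exactly where a real argument would have to do work; nothing in the paper's machinery controls it. Likewise, your proposed lift of the auxiliary-graph method of Theorem \ref{theorem:4connected-4vertices} to the case of $m\leq k-1$ intersection vertices founders on the interleaving explosion you describe: Lemma \ref{lemma:noDisjoint} produces independent sets in $H$ only for quadruples of segments meeting at a common intersection vertex, and for a general interleaving of $X$ along $P$ and along $Q$ the connectivity of $H$ does not visibly force an edge whose splicing exceeds $|P|+|Q|$. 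So the statement remains exactly what the paper says it is --- a conjecture --- and your contribution is a correct and clean localization of what would need to be proved, not a proof.
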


In this paper, we also showed that,
in a 4-connected graph,
every pair of longest paths intersect in at least 4 vertices. We believe the result for 4-connected graphs can be extended to 5-connected graphs.

\begin{conjecture}\label{conj:sparse5}
In a 5-connected graph, every pair of longest paths intersect each other in at least five vertices.
\end{conjecture}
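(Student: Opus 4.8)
The plan is to mimic the proof of Theorem~\ref{theorem:4connected-4vertices} one level up, using the $k=4$ result as the base case. Assume for contradiction that two longest paths $P$ and $Q$ in a $5$-connected graph $G$ meet in fewer than five vertices. Since $G$ is in particular $4$-connected, Theorem~\ref{theorem:4connected-4vertices} forces $|V(P)\cap V(Q)|\geq 4$, so they meet in \emph{exactly} four vertices $S=\{a,b,c,d\}$. Orient $P$ from $p_1$ so that $\mathrm{dist}_P(p_1,a)<\mathrm{dist}_P(p_1,b)<\mathrm{dist}_P(p_1,c)<\mathrm{dist}_P(p_1,d)$, i.e.\ $P$ visits $S$ in the order $abcd$. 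Because $G$ is $5$-connected and $|S|=4$, the graph $G'=G\setminus S$ is connected; this is the engine that will later drive an auxiliary graph.

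As before, I would cut $P$ into its five pieces $P'_a,P'_{ab},P'_{bc},P'_{cd},P'_d$ (the subpaths between consecutive markers, with $S$ deleted), cut $Q$ into its five analogous pieces, and form the auxiliary graph $H$ on these ten pieces with $XY\in E(H)$ exactly when $G'$ contains an $X$-$Y$ path. Two inputs populate the non-edges of $H$. First, Lemma~\ref{lemma:noDisjoint}, applied at each of $a,b,c,d$ to the (at most four) pieces incident to that marker, forbids a $P$-piece and a $Q$-piece sharing a marker from being joined in $G'$. Second, whenever a segment of $P$ joins the same pair of markers as a segment of $Q$ (which happens precisely for the pairs consecutive in \emph{both} orders, and for the end segments when the extreme markers agree), the swap trick shows the two segments have equal length and yields hybrid longest paths; these both pin down the segment lengths and rule out further joins. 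The aim, exactly as in the $k=4$ argument, is to choose a bipartition $V(H)=\mathcal A\cup\mathcal B$ for which every crossing edge except a designated ``useful'' one is already forbidden, so that the connectivity of $H$ (inherited from that of $G'$) forces the useful edge to be present; that edge then produces two paths whose lengths sum to $|P|+|Q|+2|R|>2L$, contradicting that $P$ and $Q$ are longest.

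The new feature is the case analysis. With three common vertices there were only two orders of $Q$ to consider; with four there are, up to reversing $Q$ and interchanging the roles of $P$ and $Q$, several essentially distinct orders (the $24$ linear orders of $S$ collapse to a short list under these symmetries). For each surviving order I would list the pieces incident to each marker, read off the independent sets produced by Lemma~\ref{lemma:noDisjoint} together with the equal-length swaps, and then hunt for the bipartition that isolates a single useful crossing edge. I would also dispose of the degenerate configurations separately: a piece is empty exactly when its two markers are consecutive on the path or when a marker is an endpoint, and in those situations the relevant piece cannot carry a $G'$-path, so the corresponding case either collapses or must be handled by a direct rerouting argument.

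The hard part will be the case analysis itself. Unlike the $k=4$ situation, where a single bipartition settled each of the two orders, several of the four-vertex orders split the markers so that no single bipartition leaves exactly one useful crossing edge; there the forbidden-edge information from Lemma~\ref{lemma:noDisjoint} alone is too weak, and one must lean on the hybrid longest paths---applying the disjointness lemma to them as well---to manufacture the extra non-edges (in particular the same-type, $P$-piece-to-$P$-piece non-edges) that the bipartition argument silently used when $k=4$. Verifying that this strengthened forbidden-edge package suffices in \emph{every} order, rather than in the few most symmetric ones, is exactly the obstacle that keeps the statement a conjecture, and it is where I expect the real work---and the risk that some order genuinely resists the method---to lie.
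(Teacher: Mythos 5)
The statement you are trying to prove is Conjecture~\ref{conj:sparse5}, which the paper does \emph{not} prove: it is posed in the concluding section as an open problem, explicitly left to future work. So there is no proof in the paper to compare against, and --- more importantly --- your proposal is not a proof either. It is a plan: you set up the right objects (the exactly-four intersection vertices $a,b,c,d$ guaranteed by Theorem~\ref{theorem:4connected-4vertices}, the ten pieces of $P$ and $Q$ after deleting $S=\{a,b,c,d\}$, the auxiliary graph $H$ whose connectivity comes from $G'=G\setminus S$ being connected), but the argument stops precisely where a proof would have to begin. You never enumerate the essentially distinct orderings of $\{a,b,c,d\}$ along $Q$ (there are several even after quotienting by reversal of $P$, reversal of $Q$, and swapping $P$ with $Q$); you never exhibit, for any single ordering, a bipartition of $V(H)$ whose crossing edges are all forbidden except a designated useful one; and you never carry out the final rerouting that converts the forced edge into two paths of total length $|P|+|Q|+2|R|$. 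Your closing sentence concedes that ``some order genuinely resists the method'' is a live possibility --- that concession is an accurate self-assessment, and it means nothing has been established.

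Two concrete reasons the gap is not merely clerical. First, your diagnosis of the $k=4$ proof is correct: the independent sets there (e.g.\ $\{P'_a,Q'_a,P'_{ab},Q'_{ab}\}$) contain same-type pairs such as $(P'_a,P'_{ab})$, and forbidding those requires applying Lemma~\ref{lemma:noDisjoint} to the \emph{hybrid} longest paths like $P-P_{ab}+Q_{ab}$ and $Q-Q_{ab}+P_{ab}$, not to $P$ and $Q$ themselves. But those hybrids exist only when a segment of $P$ and a segment of $Q$ run between the same pair of markers, which with three common vertices happens often and with four common vertices happens rarely: for most orderings of $a,b,c,d$ on $Q$, very few segment pairs share both endpoints, so the supply of equal-length swaps --- and hence of forbidden same-type edges --- shrinks exactly when you need more of them (ten pieces instead of eight, many more crossing pairs per bipartition). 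Second, the degenerate configurations (empty pieces when markers are consecutive on a path or coincide with path extremes) multiply with four markers, and the $k=4$ proof's treatment of them does not transfer automatically. Until every ordering is closed by an explicit bipartition-plus-rerouting argument or by a new idea, the statement remains what the paper says it is: a conjecture.
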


We believe Conjectures \ref{conj:densekn4} and \ref{conj:sparse5} can be approached with new or similar techniques as the ones presented here.
We also think the techniques presented here can be adapted to show similar results for cycles instead of paths; and, conversely, that the techniques used  by Chen \etal~\cite{Chen98} can be adapted to show similar or strong results for paths.

\bibliography{bibliografia}

\end{document}